\newtheorem*{thm}{Theorem}
\newtheorem*{cor}{Corollary}
\newtheorem*{lem}{Lemma}
\newtheorem*{prop}{Proposition}
\newtheorem*{claim}{Claim}
\theoremstyle{definition}
\newtheorem*{defn}{Definition}
\theoremstyle{remark}
\newtheorem*{rem}{Remark}
\newcommand{\Shadow}{\operatorname{Shadow}}
\newcommand{\Aut}{\operatorname{Aut}}
\newcommand{\diam}{\operatorname{diam}}
\newcommand{\defeq}{\stackrel{\text{def}}{=}}
\newcommand{\Conv}{\operatorname{Conv}}
\newcommand{\level}{\operatorname{level}}
\newcommand{\Star}{\operatorname{Star}}
\newcommand{\Sym}{\operatorname{Sym}}
\newcommand{\Haar}{\operatorname{Haar}}
\newcommand{\one}{{\mathbf{O}}}
\newcommand{\id}{{\mathit{id}}}
\newcommand{\arrow}{\longrightarrow}
\newcommand{\R}{\mathbf R}
\newcommand{\N}{\mathbf N}
\newcommand{\Z}{\mathbf Z}
\newcommand{\T}{\mathcal{T}}
\newcommand{\x}{{\bf{x}}}
\newcommand{\oxi}{\overline{\xi}}
\newcommand{\OXi}{\overline{\Xi}}
\begin{document}
\title{Most actions on regular trees are almost free}
\author{Mikl\'{o}s Ab\'{e}rt and Yair Glasner}
\address{University of Chicago, and Ben Gurion university of the Negev}
\email{yairgl@math.bgu.ac.il and abert@math.uchicago.edu}
\maketitle

\begin{abstract}
Let $T$ be a $d$-regular tree ($d\geq 3$) and $A=\mathrm{Aut}(T)$ its
automorphism group. Let $\Gamma $ be the group generated by $n$ independent
Haar-random elements of $A$. We show that almost surely, every nontrivial
element of $\Gamma $ has finitely many fixed points on $T$.
\end{abstract}

\section{Introduction}

Let $T$ be a $k$-regular tree ($k\geq 3$) and let $\mathrm{Aut}(T)$ be its
automorphism group. The topology of pointwise convergence turns $\mathrm{Aut}%
(T)$ into a locally compact, totally disconnected, unimodular topological
group. Let $\mu $ be a Haar measure on $\mathrm{Aut}(T)$ normalized so that
vertex stabilizers have measure $1$. Since $\mathrm{Aut}(T)$ is not compact, 
$\mu $ is an infinite measure.

\begin{defn}
A subgroup $\Gamma \leq \mathrm{Aut}(T)$ acts \emph{almost freely}, if every 
$\gamma \in \Gamma $ ($\gamma \neq 1$) has finitely many fixed points on $T$.
\end{defn}

Free actions on $T$ are completely understood (see Serre's book \cite{Serre:Trees}). As the following theorem and its corollary show, almost free actions have a much richer structure.

\begin{thm} (Main theorem)
Let $\Gamma <\mathrm{Aut}(T)$ be a countable subgroup acting almost freely.
Then for $\mu $-almost all elements $\gamma \in \mathrm{Aut}(T),$ the group $%
\left\langle \Gamma ,\gamma \right\rangle $ acts almost freely and is
isomorphic to the free product $\Gamma \ast \mathbb{Z}$.
\end{thm}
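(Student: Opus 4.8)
\emph{Reductions.} Fix a base vertex $o \in V(T)$. Since $T$ is locally finite, $\Fix_T(h)$ --- which for any $h \in A$ is a subtree, being an intersection of subtrees --- is infinite if and only if it contains a geodesic ray; call such an $h$ \emph{bad} (so the identity is bad, while an edge inversion, having no fixed vertex, is not). It suffices to prove: for every nontrivial reduced word $w = g_0 t^{n_1} g_1 \cdots t^{n_\ell} g_\ell \in \Gamma \ast \langle t \rangle$ with $\ell \geq 1$, the set of $\gamma \in A$ for which $\phi_\gamma(w) \defeq g_0 \gamma^{n_1} g_1 \cdots \gamma^{n_\ell} g_\ell$ is bad is $\mu$-null. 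Granting this: there are only countably many such $w$ (as $\Gamma$ is countable), so off a single $\mu$-null set $\phi_\gamma(w)$ is not bad --- in particular $\phi_\gamma(w) \neq 1$ --- for every such $w$; since also $\phi_\gamma(g_0) = g_0 \neq 1$ for $w = g_0 \in \Gamma \setminus \{1\}$, the substitution homomorphism $\phi_\gamma \colon \Gamma \ast \mathbb{Z} \to A$ is then injective, i.e. $\langle \Gamma, \gamma \rangle \cong \Gamma \ast \mathbb{Z}$; and every nontrivial element of this group equals $\phi_\gamma(w)$ for a nontrivial reduced $w$, which has finitely many fixed vertices --- by the displayed statement if $w$ involves $t$, and by almost-freeness of $\Gamma$ if $w = g_0 \in \Gamma \setminus \{1\}$. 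Finally, $\mu$ being $\sigma$-finite, decompose $A = \bigsqcup_{v \in V(T)} \{\gamma : \gamma o = v\}$; each fibre is a left coset of the compact group $A_o$ on which, after normalization, $\mu$ restricts to Haar measure, so it is enough to show $\Haar_{A_o}\big(\{a : \phi_{\gamma_v a}(w) \text{ is bad}\}\big) = 0$ for every $v$, every such $w$, and any fixed $\gamma_v$ with $\gamma_v o = v$.

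\emph{Probabilistic model and base case.} Rooting $T$ at $o$, one has the standard description $A_o = \varprojlim_m \Aut(B(o,m))$ in which a Haar-random $a \in A_o$ amounts to an independent family $(\sigma_x)_{x \in V(T)}$ of uniform random permutations, $\sigma_x$ recording how $a$ maps the child-subtrees of $x$ onto those of $a(x)$ relative to fixed reference labelings; this yields the filtration $\mathcal{F}_m = \sigma(a|_{B(o,m)})$, whose increment at level $m{+}1$ is a fresh independent family of uniform permutations of $(k{-}1)$-element sets ($k$ elements at $o$). The case $\Gamma = 1$, $w = t$ already exhibits the mechanism: conditioned on $a$ fixing a vertex $c$, the subtree of $\Fix_T(a)$ grown outward from $c$ is a Galton--Watson process whose offspring law is the number of fixed points of a uniform random permutation of $k{-}1$ symbols --- of mean exactly $1$ and, since $k \geq 3$, positive variance --- hence it dies out almost surely; a union bound over the countably many candidate $c$ then shows $a$ is a.s. not bad. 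The same critical branching governs $w = t^n$: for a deep vertex $x$ the map induced by $\gamma^n$ on the children of $x$ is, given the data revealed so far, a composite of independent uniform random bijections of $(k{-}1)$-element sets, hence itself uniform, so the offspring mean is again $1$.

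\emph{The general word, and the main obstacle.} Fix $v$ and a reduced $w$ with $\ell \geq 1$, and write a random element of the fibre as $\gamma = \gamma_v a$ with $a \in A_o$ Haar-random. If $\phi_\gamma(w)$ fixes a ray $\rho$, let $c = x_0, x_1, x_2, \dots$ enumerate $\rho$ from its vertex nearest $o$; expanding $w = s_1 \cdots s_L$ in the alphabet $\Gamma \cup \{t, t^{-1}\}$, the equality $\phi_\gamma(w) x_N = x_N$ unwinds (reading right to left) into a closed ``$w$-trajectory based at $x_N$'', a sequence $x_N = p_0, p_1, \dots, p_L = x_N$ with $p_j = \phi_\gamma(s_{L-j+1}) p_{j-1}$. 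The plan is to track these trajectories as $N \to \infty$ while conditioning on $\mathcal{F}_{m(N)}$ for a suitable $m(N) \to \infty$, arranged so that --- apart from the boundedly many steps inside the bounded ``core'' region determined by $w$ and $\gamma_v$ --- every step applying $\gamma^{\pm 1}$ acts on a vertex whose governing permutations $\sigma_x$ are still unrevealed; demanding that the trajectory return to its start then costs a fresh uniform coincidence at each such step, so $\Pr\big[\phi_\gamma(w) \text{ fixes } x_0, \dots, x_N \mid \mathcal{F}_{m(N)}\big]$ is dominated by the survival-to-generation-$N$ probability of an explicit branching process which, by the composite-of-uniform-bijections phenomenon above, is non-supercritical with positive variance; extinction plus a union bound over the countably many $c$ finishes the proof. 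The technical heart --- and the step I expect to be hardest --- is making this tracking rigorous against two difficulties. First, \emph{correlations}: the single random $\gamma$ is substituted at every $t$-syllable, so the branching increments arising at different stages of unwinding one trajectory, and across neighbouring trajectories, are not manifestly independent; one must organize the conditioning as an induction on the syllable length $\ell$, peeling one $t$-syllable (and revealing exactly one new layer of permutation data) at a time, so that the increments become conditionally independent and uniform. Second, \emph{cancellation}: a subword like $t^{-1} g t$ with $g$ close to the identity, or a $t$ conspiring with a later $t^{-1}$, could leave a deep vertex almost in place; here one uses that $w$ is reduced --- every interior $\Gamma$-syllable $g_i$ is nontrivial --- together with almost-freeness of $\Gamma$, which keeps each $\Fix_T(g_i)$ finite and hence asymptotically negligible, to ensure that deep trajectories genuinely explore fresh randomness rather than degenerating. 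Verifying that the resulting branching process provably stays subcritical is the crux.
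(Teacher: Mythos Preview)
Your reductions and base case are exactly right, and the overall architecture --- countably many words, induction on length, critical Galton--Watson at the bottom --- matches the paper. But the sketch stops precisely where the real difficulty begins, and the proposed resolution of that difficulty is not quite the right one.

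The correlation problem is the crux, as you say, and your proposed fix (``peel one $t$-syllable at a time, revealing one fresh layer of permutation data'') does not work as stated: all $t$-syllables invoke the \emph{same} random $\gamma$, so revealing data for one syllable constrains the others, and there is no natural ordering of syllables that makes the increments conditionally independent. The paper's resolution is different and sharper. It does not try to dominate by a branching process; instead it shows that, once you go far enough out along a fixed ray, the restriction $w(\gamma)|_{\Shadow[e]}$ to an outward shadow is \emph{exactly} Haar-distributed on $\Aut(\Shadow[e])$, so the Ab\'ert--Vir\'ag result applies verbatim. The mechanism is: condition on the full trace $(e_0,\dots,e_n,e_{n+1}=e_0)$ of the edge $e$ under the syllables of $w$; among the indices $i$ with $w_i=t$ or $w_{i-1}=t^{-1}$, choose one, $I$, whose shadow $\Shadow[e_I]$ is inclusion-maximal (deepest); then for each vertex $x^j$ of $\Shadow[e]$, the local permutation of $w$ at $x^j$ factors as $A^j\,\Xi^j\,B^j$ where $\Xi^j$ is the local permutation of $\gamma^{\pm 1}$ at $x^j_I$. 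An explicit measure-preserving action of $\Sym(\Sigma)_0$ on the conditioned probability space --- left-multiplying $\gamma$ by an automorphism supported on the shadow of $x^j_I$ --- moves $\Xi^j$ by the regular action while fixing every other factor $A^k,\Xi^k,B^k$ with $k\le j$; this forces $\Xi^j$, and hence $A^j\Xi^jB^j$, to be uniform and independent of all earlier data. This ``special index'' trick is the missing idea in your outline.

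Your handling of cancellation is also too weak. You invoke almost-freeness of $\Gamma$ to control $\Fix_T(g_i)$, but that is not enough: the obstruction to trace-simplicity is that some \emph{proper subword} $w_i w_{i+1}\cdots w_{i+k}$ (possibly cyclic, and typically involving several $t^{\pm 1}$'s and several $\Gamma$-syllables) fixes the vertex in question. The paper uses the full induction hypothesis --- that every shorter word already has finitely many fixed points almost surely --- together with cyclic reduction of $w$, to produce a radius $M$ beyond which all traces are simple. Only then can the special-index argument go through. Without this, the branching process you envision need not be critical: subwords with infinitely many fixed points would create persistent coincidences in the trajectory and spoil the independence.
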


\begin{cor} \label{cor:main}
Let $a_{1},\ldots ,a_{n}$ be independent Haar-random elements of $\mathrm{Aut%
}(T)$ and let $\Gamma =\left\langle a_{1},\ldots ,a_{n}\right\rangle $. Then
almost surely, $\Gamma $ is a free group of rank $n$ that acts almost freely
on $T$.
\end{cor}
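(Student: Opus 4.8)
The plan is to deduce Corollary \ref{cor:main} from the Main Theorem by an induction on $n$, with Fubini's theorem doing the bookkeeping. First I would pin down the meaning of the statement: since $\mu$ is infinite, ``$a_1,\dots,a_n$ independent Haar-random'' and ``almost surely'' must be read in terms of the product measure $\mu^n$ on $A^n$ (write $A=\mathrm{Aut}(T)$), i.e.\ as the assertion that the set of tuples for which the conclusion fails is $\mu^n$-null. For Fubini one needs $\mu$ to be $\sigma$-finite, and it is: fixing a vertex $v_0$, the sets $\{g\in A:gv_0\in B(v_0,r)\}$, $r=1,2,\dots$, exhaust $A$ and each is a finite union of cosets of the compact open subgroup $\mathrm{Stab}(v_0)$, hence of finite measure.

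Next I would set up the induction. Put $\Gamma_m=\langle a_1,\dots,a_m\rangle$ and let $B_m\subseteq A^m$ be the set of tuples for which $\Gamma_m$ either fails to be a free group of rank $m$ or fails to act almost freely on $T$; the goal is $\mu^m(B_m)=0$ for all $m$. The set $B_m$ is Borel: ``acts almost freely'' is a countable intersection, over the elements of $\Gamma_m$ written as words in $a_1,\dots,a_m$, of the Borel conditions ``this automorphism has only finitely many fixed vertices'', while (using Hopficity of free groups) ``$\Gamma_m\cong F_m$'' amounts to ``$a_1,\dots,a_m$ freely generate $\Gamma_m$'', a countable intersection over nontrivial reduced words $w$ of the open conditions ``$w(a_1,\dots,a_m)\neq 1$''. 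The base case is trivial, since $B_0=\emptyset$; equivalently, the case $n=1$ is the Main Theorem applied to the trivial subgroup, which gives $\langle\gamma\rangle\cong\mathbb Z$ acting almost freely for $\mu$-almost every $\gamma$.

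For the inductive step, assume $\mu^{n-1}(B_{n-1})=0$. Splitting $A^n$ over the first $n-1$ coordinates and applying Fubini to $B_n$,
\[
\mu^n(B_n)\ \le\ \mu^{n-1}(B_{n-1})\cdot\mu(A)\ +\ \int_{A^{n-1}\setminus B_{n-1}}\mu\bigl(\{\,\gamma\in A:(a_1,\dots,a_{n-1},\gamma)\in B_n\,\}\bigr)\,d\mu^{n-1}.
\]
The first term is $0$: $\mu^{n-1}(B_{n-1})=0$, and $\sigma$-finiteness of $\mu$ makes $0\cdot\mu(A)$ genuinely vanish (cover $A$ by sets of finite measure). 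For the second, fix any $(a_1,\dots,a_{n-1})\notin B_{n-1}$; then $\Gamma_{n-1}$ is a finitely generated --- hence countable --- subgroup of $A$ acting almost freely, so the Main Theorem applies and shows that for $\mu$-almost every $\gamma\in A$ the group $\langle a_1,\dots,a_{n-1},\gamma\rangle=\langle\Gamma_{n-1},\gamma\rangle$ acts almost freely and is isomorphic to $\Gamma_{n-1}\ast\mathbb Z\cong F_{n-1}\ast\mathbb Z\cong F_n$. Thus the inner measure vanishes for every such $(a_1,\dots,a_{n-1})$, the integral is $0$, and so $\mu^n(B_n)=0$, completing the induction. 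The hard part is entirely inside the Main Theorem; within this deduction the only things needing attention are the $\sigma$-finiteness and Borel-measurability remarks, the fact that a finitely generated subgroup is countable so that the Main Theorem applies, and the triviality $F_{n-1}\ast\mathbb Z\cong F_n$.
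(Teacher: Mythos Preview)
Your argument is correct and is exactly the deduction the paper has in mind: the paper gives no separate proof of the corollary, treating it as an immediate consequence of the Main Theorem by induction on $n$ together with Fubini, and you have carefully supplied the details (the $\sigma$-finiteness of $\mu$, the Borel nature of the bad set, and the handling of $0\cdot\infty$). One small remark: the appeal to Hopficity is not really needed, since the isomorphism $\Gamma_{n-1}\ast\mathbb Z\cong\langle\Gamma_{n-1},\gamma\rangle$ produced by the Main Theorem is the canonical evaluation map, so it already sends the free generators to $a_1,\dots,a_{n-1},\gamma$; thus you directly get that these elements freely generate, and you may as well build that into the definition of $B_m$.
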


In a previous paper \cite{AG:Generic}, the authors proved that for $n\geq 2$, the closure
of $\Gamma $ almost surely satisfies the following trichotomy. It is either:

\begin{enumerate}
\item  \label{itm:disc} discrete in $\mathrm{Aut}(T)$;
\item \label{itm:cpt} or fixes a point or a geometric edge of $T$;
\item \label{itm:dense} or has index at most $2$ in $\mathrm{Aut}(T)$.
\end{enumerate}
\noindent
We also showed that all possibilities happen on a set of infinite Haar
measure. In another paper of the first author and Vir\'{a}g \cite{AV-dimension_theory}, it is proved
that random subgroups as above acting on a rooted tree act almost freely almost surely. This effectively proves Theorem 2 when case (\ref{itm:cpt}) of the trichotomy holds, but for the general result, a new approach is needed.

Like most random results, the Corollary to the main theorem can be used to show the existence of
structures that are hard to construct directly. In particular, the Corollary to the main theorem and part (\ref{itm:dense}) of the trichotomy result together imply that there \emph{exists} a finitely generated dense free subgroup of $\mathrm{Aut}(T)$ that acts almost freely on $T$.

\subsection*{Remark.} In proving the trichotomy above, an essential tool is to understand how $n$-tuples behave under the action of the so-called Nielsen transformations. In a forthcoming paper \cite{G:ZO}, the second author shows that this action is actually ergodic on the two compontents appearing in case (\ref{itm:dense}) of the dichotomy: the compoenent where $\Gamma$ is dense and the one where it is dense in a subgroup of index two. Since acting almost freely is a measurable property of $n$-tuples, to obtain the main Corollary in case (\ref{itm:dense}), it would be enough to show that it holds on a set of positive measure on both components. However, this does not seem to be any easier than proving the full statement.

\subsection*{Organization of the paper}
The paper is organized as follows. After introducing some notation and
preliminary results in Section 2, we prove the main Theorem in Section 3. The notation is such that when we refer to, say, Definition 3.3 we mean the definition in section 3.3 (in particular there will be only one such).

\subsection*{Thanks} The work on this paper was made possible by the support of a joint BSF grant 2006222, for which we are thankful.

\section{Notation and preliminaries} \label{sec:notation}
\subsection{Trees}
Let $T$ be a $d$-regular tree with automorphism group $A = \Aut(T)$. The set of vertices will also be denoted by $T$, the set of directed edges $ET$. Each edge $e \in ET$ admits an inverse edge $\overline{e}$ as well as an origin vertex $oe \in T$ and a terminal vertex $te \in T$. These maps are required to satisfy the obvious compatibility restrictions; namely $\overline{\overline{e}} = e$, $t(\overline{e}) = oe$ and $o(\overline{e}) = te$. {\it Geometric edges} will be represented by unordered pairs $[e]:=\{e,\overline{e}\}$. The {\it star of a vertex} $\Star(v) = \{e \in ET \ | oe = v\}$ is the set of edges originating at this vertex. Fix a base vertex $\one \in T$ and let $A_0 \defeq A_{\one} < A$ be the compact group fixing $\one$.  The standard graph metric on $T$ will be denoted by $d:T \times T \arrow T$. We will denote the unique geodesic path connecting two vertices $x,y \in T$ by $[x,y]$.

\subsection{The boundary of the tree}
A geodesic ray is an embedding of the half line $\eta: \R^{+} \arrow T$ into the tree. We say that two such rays $\eta,\eta'$ are equivalent if there exists a number $b$ such that $\eta(n) = \eta'(n + b) \ \ \forall n > N$ for some $N \in \N$. We define the boundary of the tree $\partial T$ to be the set of all equivalence classes of geodesic rays. There is a natural topology on $\overline{T} := T \cup \partial T$ making it into a compact space and the action of $\Aut(T)$ on $T$ extends in a continuous way to an action on this compactification. For every two points $x,y \in \overline{T}$ there is a unique geodesic path connecting these two points, which can be finite, half infinite, or bi-infinite depending on whether none, one or both are boundary points. 

\subsection{Dynamics of tree automorphisms} \label{sec:element_class}
We recall here briefly the classification of automorphisms of $T$ according to their dynamical properties, referring the readers to Serre's book \cite{Serre:Trees} for the complete details.
For every $\phi \in \Aut(T)$ we define $\delta(\phi) = min_{x \in T} \{d(x, \phi x)\}$ and $X(\phi) = \{x \in T \ | \ d(x,\phi x) = \delta(\phi)\}$. There are three possibilities that arise:
\begin{enumerate}
\item $\phi$ fixes a vertex of the tree. In this case $\delta (\phi) = 0$ and $X(\phi)$ is the tree of fixed points for $\phi$. Such a $\phi$ is called {\it{elliptic}}. 
\item $\phi$ does not fix a vertex, but it inverts a geometric edge. In this case $\delta(\phi) = 0$ and $X(\phi)$ is one point - the midpoint of the edge inverted by $\phi$. Such a $\phi$ is called {\it{an inversion}}.
\item $\phi$ does not fix any point in the geometric realization of the tree. Such an element is called {\it{hyperbolic}}. In this case $\phi$ fixes exactly two points on the boundary and $X(\phi)$ is the bi-infinite geodesic line connecting these two points,  referred to as the {\it{axis}} of $\phi$. A hyperbolic element acts as a translation of length $\delta(\phi)$ on its axis. For any other point $y \in T$ we have $d(y,\phi y) = \delta(\phi) + 2 d(y, X(\phi))$. 
\end{enumerate}

Using this classification, it is easy to see that a group $\Gamma < \Aut(T)$ acts almost freely on $T$ if and only if all elliptic elements of $\Gamma $ do not fix any point on the boundary of $T$. Indeed hyperbolic elements and inversions do not fix any vertex, and an elliptic element fixes infinitely many vertices if and only if it fixes a point on the boundary, by K\"{o}ning's lemma. Note that $\Gamma$ acts freely if and only if it contains only hyperbolic elements.

\subsection{Orientation} An orientation on $T$ is a choice of one directed edge $e$ from every pair of opposite directed edges $\{e,\overline{e}\}$. We will fix one orientation, namely that of all edges facing away from the base vertex, throughout this paper $$E^{+}T \defeq \{e \in ET \ | \ d(\one,te) > d(\one,oe) \} \subset ET.$$

\subsection{Legal coloring} Let $\Sigma = \{0,1,\ldots, d-1\}$, we will refer to this set as our {\it set of colors}. A legal coloring is a map $c: ET \arrow \Sigma$ such that for every vertex $v \in T$, the restricted map $$c|_{\Star(v)}: \Star(v) \arrow \Sigma$$ is a bijection. Note that we do not specify any requirement concerning the color of opposite edges. We will fix a legal coloring $c$ throughout this paper and require that it be compatible with the orientation in the sense that all the negative edges are colored zero$$ce = 0 \ \forall e \in ET \setminus E^+T.$$

\subsection{The first congruence map}
Let $\Sym(\Sigma)$ be the symmetric group on the set of colors and $\Sym(\Sigma)_0$ the stabilizer of the first color ``$0$''.
The legal coloring gives rise to a {\it first congruence map} associating to each automorphism $a \in A_0$ the permutation that it induces on $\Star(\one)$, namely
\begin{eqnarray*}
\overline{\cdot }: A_0 & \arrow & \Sym(\Sigma) \\
a & \mapsto & \overline{a}  \defeq c \circ a \circ (c|_{\Star(\one)})^{-1}
\end{eqnarray*}

\subsection{Color preserving automorphisms}
Given any vertex $v \in T$, denote by $(T,v)$ a copy of the tree $T$, but considered as a rooted tree with a base vertex at $v$.  In particular $S = (T,\one)$ is a specific model copy of this rooted tree. This tree has $d(d-1)^l$ vertices in the $l^{th}$ level and it's automorphism group is isomorphic to the compact group $A_0 = \Aut(T,\one)$. 

Given any two vertices $v,u \in T$ there is a unique automorphism $a_{(u,v)} \in A$ satisfying the following conditions:
\begin{itemize}
\item $u^{a_{(u,v)}} = v$,
\item $a_{(u,v)}$ preserves the colors of ``outgoing edges'', namely
$$c e^{a_{(u,v)}} = ce \quad \forall e \in ET {\text{ such that }} d(u,te) > d(u,oe).$$
\end{itemize}
The elements $a_{(u,v)}$ do not form a group, but they satisfy the condition 
\begin{equation} \label{eqn:cocycle1}
a_{(u,v)} a_{(v,w)} = a_{(u,w)}
\end{equation}
and in particular $a_{(v,u)} = a_{(u,v)}^{-1}$.

\subsection{Shadows} For every edge $e \in E^{+}T$ set $\Shadow[e] \defeq \{v \in T \ | \ te \in [oe,v] \}$. We will think of $\Shadow[e]$ as a $(d-1)$-ary rooted tree, with its base vertex at $te$. If $e, f \in E^{+}T$ are any two positively oriented edges, then 
$$\Shadow[e]^{a_{(te,tf)}} = \Shadow[f].$$
This useful property is due to the fact the edge coloring is compatible with the orientation.

\subsection{The local permutation cocycle.}
\begin{defn}
The {\it permutation cocycle} is defined to be the map
\begin{eqnarray*}
 \xi: & A \times T & \rightarrow A_0 \\
      & (a,v)  & \mapsto  a_{(\one,v)} \circ a \circ a_{(v^a,\one)}.
\end{eqnarray*}
The {\it local permutation cocycle} is defined as the composition of the permutation cocycle with the first congruence map. Explicitly this map assumes the form
\begin{eqnarray*}
\oxi: & A \times T & \rightarrow   \Sym(\Sigma) \\
      & (a,v)    & \mapsto  (c|_{\Star v})^{-1} \circ a|_{\Star v} \circ c.
\end{eqnarray*}
\end{defn}

\begin{rem} [The cocycle identity]
It is follows directly from equation (\ref{eqn:cocycle1}) that both $\xi$ and $\oxi$ satisfy the cocycle identity 
 identity:
 \begin{eqnarray*} \label{eqn:cocycle2}
     \nonumber \xi(ab,v) & = & \xi(a,v) \xi(b,v^a) \\
     \oxi(ab,v) & = & \oxi(a,v) \oxi(b,v^a) 
 \end{eqnarray*}
\end{rem}

\subsection{The cocycle and Haar measure.} \label{sec:co_Haar}
Since the collection of elements $\{a_{(\one,v)}\}_{v \in T}$ form a complete set of coset representatives for the group $A_0 = A_{\one}$; there is a bijection,
\begin{eqnarray*} \label{eqn:Phi}
\nonumber \Phi: A & \arrow & T \times A_0 \\
a & \mapsto & \left( \one^a, \xi(a,\one) \right) .
\end{eqnarray*}
The inverse of $\Phi$ is given by $\Phi^{-1}(v,b) = b \circ a_{(\one,v)}$. A useful feature of this map is its compatibility with Haar measure, 
$$\Phi_*(\Haar_A) = ({\text{Counting measure on }} T) \times \Haar_{A_0}.$$

We can decompose further, encoding the information about an element of $A_0$ according to its local permutation cocycle. 
\begin{eqnarray*}
\Psi: A_0  & \arrow & \Sym(\Sigma) \times \prod_{\one \ne v \in T}\Sym(\Sigma)_0 \\
b & \mapsto & \prod_{v \in T} \oxi(b,v)
\end{eqnarray*}
In the above equation, the requirement that the orientation $E^+T$ be preserved by $A_0$ is encoded by the fact that the local permutation cocycle fixes the color zero at all but the base vertex. This feature, which will greatly simplify our notation, is again due to our choice of a legal coloring that  is compatible with the orientation. 

This map too is compatible with Haar measure in the sense that 
\begin{equation*} \label{eqn:Psi} 
\Psi_*(\Haar_{A_0}) = \Haar_{\Sym(\Sigma)} \times \prod_{\one \ne v \in T} \Haar_{\Sym(\Sigma)_0}.
\end{equation*}
\noindent Combining the Equations (\ref{eqn:Phi}) and (\ref{eqn:Psi}) above we obtain
\begin{prop} \label{prop:Haar}
The map
\begin{eqnarray*}
A  & \arrow & T \times \Sym(\Sigma) \times \prod_{\one \ne v \in T}\Sym(\Sigma)_0 \\
b & \mapsto & \left(\one^b , \prod_{v \in T} \oxi(b,v)\right)
\end{eqnarray*}
is a measure preserving continuous bijection. Where the measure on the right is the product of the counting measure on $T$ and the product measure on the compact group.
\end{prop}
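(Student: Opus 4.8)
The plan is to verify the three claimed properties—that the map is a well-defined bijection, that it is continuous, and that it pushes Haar measure forward to the stated product measure—by assembling the two intermediate maps $\Phi$ and $\Psi$ already introduced, and checking compatibility at each stage. First I would recall that $\Phi\colon A\to T\times A_0$, $a\mapsto(\one^a,\xi(a,\one))$, is a bijection with explicit inverse $\Phi^{-1}(v,b)=b\circ a_{(\one,v)}$; this is immediate from the fact that $\{a_{(\one,v)}\}_{v\in T}$ is a transversal for $A_0$ in $A$. Then I would argue that $\Psi\colon A_0\to\Sym(\Sigma)\times\prod_{\one\ne v}\Sym(\Sigma)_0$, $b\mapsto\prod_v\oxi(b,v)$, is also a bijection: injectivity follows because an element of $A_0$ is determined by its action on every star (the $\oxi(b,v)$ collectively encode $b|_{\Star v}$ for all $v$, and the tree is built by gluing stars), and surjectivity by constructing an automorphism level by level from any prescribed family of local permutations, using the fact that the color-$0$ constraint at $v\ne\one$ is exactly what forces the constructed map to respect the orientation and hence the coloring convention. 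Composing, $a\mapsto(\one^a,\prod_v\oxi(a,\cdot))$ where $\oxi$ is evaluated on $\xi(a,\one)\in A_0$, but one checks via the cocycle identity and the definition of $\xi$ that $\oxi(\xi(a,\one),v)=\oxi(a,v)$, so the composite is exactly the map in the statement, and it is a bijection as a composition of two bijections.

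Next I would address continuity. The target carries the product topology, so it suffices to check that each coordinate of the map is continuous: $a\mapsto\one^a$ is continuous because $A$ acts continuously on the discrete set $T$; and for each fixed $v$, the map $a\mapsto\oxi(a,v)=(c|_{\Star v})^{-1}\circ a|_{\Star v}\circ c$ depends only on the action of $a$ on the finite set $\Star(v)$, hence is locally constant, in particular continuous. (Note the map is a continuous bijection but not a homeomorphism, since $A$ is $\sigma$-compact and non-compact while the target is compact times a countable discrete set—this mismatch is harmless.)

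Finally, the measure statement. I would invoke the two compatibility facts already stated in the excerpt: $\Phi_*(\Haar_A)=(\text{counting on }T)\times\Haar_{A_0}$ and $\Psi_*(\Haar_{A_0})=\Haar_{\Sym(\Sigma)}\times\prod_{\one\ne v}\Haar_{\Sym(\Sigma)_0}$. Applying $\id_T\times\Psi$ to the first and substituting the second gives exactly the claim, once one observes that the composite map of the Proposition equals $(\id_T\times\Psi)\circ\Phi$ up to the identification $\oxi(\xi(a,\one),v)=\oxi(a,v)$ noted above. The only genuine work—and the step I expect to be the main obstacle—is establishing that $\Psi$ is a bijection compatible with Haar measure, i.e.\ proving surjectivity by the level-by-level construction and verifying that the product of the normalized Haar measures on the symmetric-group factors is the pushforward of $\Haar_{A_0}$; this is essentially the statement that a Haar-random element of $A_0$ is obtained by choosing its local permutation at each vertex independently and uniformly, which is standard but should be spelled out using the description of $A_0$ as an inverse limit of its actions on finite truncations of $S=(T,\one)$ together with the fact that the kernel of each truncation map is a product of copies of $\Sym(\Sigma)_0$ acting on the next level. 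Everything else is bookkeeping with the cocycle identity.
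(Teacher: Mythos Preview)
Your proposal is correct and follows exactly the paper's approach: the paper's entire argument is the one line ``Combining the Equations above we obtain [the Proposition]'' together with the remark that measure preservation ``can be directly verified, using the cocycle identity, by checking that the measures on the right are invariant,'' and you have simply unpacked this into the composition $(\id_T\times\Psi)\circ\Phi$ with the requisite checks. One small correction to your parenthetical: the map \emph{is} a homeomorphism, since $\Psi$ is a continuous bijection between compact Hausdorff spaces (hence a homeomorphism) and $\Phi$ identifies $A$ with $T\times A_0$ as topological spaces; but the Proposition only asserts ``continuous bijection,'' so this does not affect your argument.
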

The fact that all the above maps are measure preserving can be directly verified, using the cocycle identity, by checking that the measures on the right are invariant. 

\subsection{Subtrees}
An analysis similar to the one carried out in the last section holds also for subtrees.  We will demonstrate this in a specific case that would be of interest for us, namely the shadow of a positively oriented edge.

Let $e \in E^+T$ be a positive edge, $Y = \Shadow[e]$, its shadow considered as a rooted tree. The automorphism group $\Aut(Y)$ is naturally a subquotient of $A$, namely there is a short exact sequence:
$$1 \arrow A_{Y} \arrow A_{\{Y\}} \arrow \Aut(Y) \arrow 1,$$ 
where $A_{\{Y\}}$ stands for the setwise stabilizer and $A_{Y}$ stands for the pointwise stabilizer of the tree $Y$. This short exact sequence splits $\iota: \Aut(Y) \hookrightarrow A$, so that $\Aut(Y)$ can also be realized as a subgroup of $A$.  
\begin{equation} \label{eqn:iota}
\iota(a)(x) = \left\{ \begin{array}{ll} a(x) & x \in \Shadow[e] \\ x & x \not \in \Shadow[e] \end{array} \right.
\end{equation}

The tree $Y$ inherits a legal coloring from the ambient tree $T$. Since $Y$ is the shadow of a positive edge, the orientation inherited from $T$ coincides with the natural orientation on $Y$. We may therefore ignore all negative edges, and consider the coloring as a map $c : E^{+}Y \arrow \{1,2, \ldots d-1\}$. The local permutation cocycle is defined in exactly the same way as it is defined for the ambient tree $\oxi: Aut(Y) \times Y \arrow \Sym(\Sigma)_0$, where we have identified here $\Sym(\Sigma \setminus \{0\}) \cong \Sym(\Sigma)_0$. Just like in the ambient tree, the local permutation cocycle completely characterizes the automorphism and is compatible with Haar measure
\begin{eqnarray*}
\Aut(Y) & \stackrel{\arrow}{~} & \prod_{x \in Y} \Sym(\Sigma)_0 \\
a & \mapsto & \prod_{x \in Y} \oxi(a,x) \\
\Haar_Y & \mapsto & \prod_{x \in Y} \Haar_{\Sym(\Sigma)_0}
\end{eqnarray*}
The use the same notation for the cocycle on both trees is justified by the following claim whose verification is trivial.
\begin{claim}
Let $a,b \in A_{\{Y\}}$ be two elements such that $a|_Y = b|_Y$. Then 
$$\oxi^T(a,x) = \oxi^T(b,x) = \oxi^Y(a|_Y,x), \quad \forall x \in Y.$$
where $\oxi^T, \oxi^Y$ represent the local permutation cocycle with respect to the trees $T$ and $Y$ respectively. 
\end{claim}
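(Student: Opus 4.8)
The plan is to establish the single identity $\oxi^{T}(a,x) = \oxi^{Y}(a|_{Y},x)$ for every $a \in A_{\{Y\}}$ and every $x \in Y$; the equality $\oxi^{T}(a,x) = \oxi^{T}(b,x)$ is then immediate, since the right-hand side depends only on the restriction $a|_{Y}$. The preliminary observation is that $A_{\{Y\}}$ fixes the root $r := te$ of $Y$: among the vertices of $Y$, $r$ is the only one with a neighbour in $T \setminus Y$ (for $x \in Y \setminus \{r\}$ the geodesic $[x,r]$, together with every edge leaving $x$ away from $r$, stays inside the shadow), so any $a$ stabilizing $Y$ fixes $r$, permutes the $d-1$ edges of $\Star_{Y}(r)$ among themselves, and hence also fixes the one remaining edge $\overline{e} \in \Star_{T}(r)$ and its reverse $e$. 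In particular $a|_{Y}$ is an honest automorphism of the rooted tree $Y$.

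After that the proof is just the unwinding of the definition of $\oxi$, which records, for a vertex $x$, the bijection $\Star_{T}(x) \to \Star_{T}(x^{a})$ induced by $a$ read through the fixed colouring $c$. At the root, $\Star_{T}(r)$ is the disjoint union of $\Star_{Y}(r)$ with the single edge $\overline{e}$, which has colour $0$ and is fixed by $a$; thus $\oxi^{T}(a,r)$ fixes the colour $0$, lies in $\Sym(\Sigma)_{0}$, and on $\{1,\dots,d-1\}$ reads off exactly the bijection $a$ induces on $\Star_{Y}(r)$ in the inherited colouring, i.e.\ $\oxi^{Y}(a|_{Y},r)$ under $\Sym(\Sigma \setminus \{0\}) \cong \Sym(\Sigma)_{0}$. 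For a non-root $x \in Y$, every edge of $\Star_{T}(x)$ lies in $Y$, and so does every edge of $\Star_{T}(x^{a})$ since $a$ is injective and fixes $r$; the unique negatively oriented edge of $\Star_{T}(x)$ is precisely the edge from $x$ toward $r$, it carries colour $0$, and, $a|_{Y}$ being a rooted-tree automorphism, it goes to the corresponding toward-the-root edge at $x^{a}$, again of colour $0$, while on the $d-1$ positive edges the colourings of $T$ and of $Y$ agree. So again $\oxi^{T}(a,x)$ fixes $0$ and restricts on $\{1,\dots,d-1\}$ to $\oxi^{Y}(a|_{Y},x)$, as desired.

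I do not expect a genuine obstacle here --- this is the ``trivial verification'' promised --- and the only points that deserve to be spelled out are the two already isolated: that the setwise stabilizer $A_{\{Y\}}$ fixes the root of $Y$, and that the colour-$0$ edge at each vertex of $Y$ is exactly that vertex's ``toward the root'' edge, so that a rooted-tree automorphism of $Y$ manipulates it compatibly with the ambient permutation cocycle on $T$.
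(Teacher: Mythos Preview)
Your argument is correct. The paper itself gives no proof, declaring the verification ``trivial''; you have supplied exactly the routine check it omits, and the two points you flag (that the setwise stabilizer of $Y$ fixes the root $te$ and hence the edge $\overline{e}$, and that the colour-$0$ edge at each non-root vertex of $Y$ is its toward-the-root edge) are precisely what make the unwinding go through.
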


The following lemma, that follows directly form the above observations, will be very useful for us in the proof of the main theorem.
\begin{lem} \label{lem:Haar_LPC}
Let $Y \subset T$ be the shadow of a positive edge as above, $a \in A_{\{Y\}}$ a (not necessarily Haar) random group element. Then $a|_Y$ admits the distribution of a $\Haar_Y$-random element, if and only if 
$$\{\oxi (a,x) \ | \ x \in Y\}$$ are mutually independent, $\Haar_{\Sym(\Sigma)_0}$-random elements of $\Sym(\Sigma)_0$. 
\end{lem}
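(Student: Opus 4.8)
The plan is to reduce the lemma to the measure-preserving parametrization of $\Aut(Y)$ by its local permutation cocycle, that is, to the subtree analogue of Proposition \ref{prop:Haar} recorded just above, together with the compatibility statement in the preceding Claim. First I would introduce the parametrizing map
\[
\Phi_Y : \Aut(Y) \arrow \prod_{x \in Y}\Sym(\Sigma)_0, \qquad b \longmapsto \bigl(\oxi(b,x)\bigr)_{x \in Y},
\]
and record, from the discussion of shadows of positive edges, that $\Phi_Y$ is a continuous bijection with $(\Phi_Y)_\ast \Haar_Y = \prod_{x \in Y}\Haar_{\Sym(\Sigma)_0}$. Since $\Aut(Y)$ and $\prod_{x\in Y}\Sym(\Sigma)_0$ are both compact and metrizable, a continuous bijection between them is automatically a homeomorphism, hence a Borel isomorphism; so $(\Phi_Y)_\ast$ is a bijection from Borel probability measures on $\Aut(Y)$ onto Borel probability measures on $\prod_{x\in Y}\Sym(\Sigma)_0$, carrying $\Haar_Y$ to the product of Haar measures.

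Next I would connect $\Phi_Y$ with the ambient cocycle appearing in the statement. Given a random element $a \in A_{\{Y\}}$, the preceding Claim gives $\oxi(a,x) = \oxi(a|_Y, x)$ for every $x \in Y$, so the family $\{\oxi(a,x)\mid x \in Y\}$, regarded as a single random point of $\prod_{x\in Y}\Sym(\Sigma)_0$, is exactly $\Phi_Y(a|_Y)$. The condition that this family be mutually independent with each member $\Haar_{\Sym(\Sigma)_0}$-distributed is, by definition, the condition that the law of $\Phi_Y(a|_Y)$ be the product measure $\prod_{x\in Y}\Haar_{\Sym(\Sigma)_0} = (\Phi_Y)_\ast \Haar_Y$. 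Applying the inverse of the bijection $(\Phi_Y)_\ast$, this holds if and only if the law of $a|_Y$ equals $\Haar_Y$, which is precisely the two-sided implication claimed.

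I do not expect a genuine obstacle here: the lemma really is a bookkeeping consequence of the subtree version of Proposition \ref{prop:Haar}. The one point to be careful about is that the translation through $\Phi_Y$ can be undone at the level of distributions, i.e. that $\Phi_Y$ is a Borel isomorphism rather than merely a measure-preserving surjection, which is where compactness enters (a continuous bijection of compact metrizable spaces is a homeomorphism); together with the trivial observation that ``mutually independent with prescribed marginals'' is literally the assertion ``joint law equals the product measure'', this closes the argument.
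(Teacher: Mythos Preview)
Your argument is correct and is exactly the unpacking the paper has in mind: the lemma is stated there with a bare \qed\ as an immediate consequence of the subtree version of Proposition~\ref{prop:Haar} together with the preceding Claim identifying $\oxi(a,x)$ with $\oxi(a|_Y,x)$. The only extra care you take---noting that compactness upgrades the continuous bijection $\Phi_Y$ to a Borel isomorphism so that pushforward of measures is invertible---is a detail the paper leaves implicit.
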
 \qed

\section{Proof of the main theorem} \label{sec:proof}
We suggest that the readers refer to Figure \ref{fig:genII} for an illustration of some of the geometric ideas.
\subsection{Fixing a word $\mathbf{w \in \Gamma * \Z}$}
Assume that $\Gamma < A$ is a countable group that acts almost freely on the tree. Consider the free product $\Gamma * \Z$, denoting the generator of the cyclic group by $t$. For $a \in A$ there is a canonical {\it evaluation} homomorphism which is the identity on $\Gamma$ and sends $t$ to $a$.
\begin{eqnarray*}
  \Phi_a: \Gamma * \Z & \arrow & \langle \Gamma , a \rangle\\
  w & \mapsto & w(a).
\end{eqnarray*}
As $\Gamma * \Z$ is countable {\it our main theorem will follow} by showing that for every fixed $1 \ne w \in \Gamma * \Z$ and for almost every $a \in A$; the element $w(a) \defeq \Phi_a(w)$ fixes but finitely many vertices. We will henceforth fix such an element $\id \ne w \in \Gamma * \Z$.

\subsection{Conditioning that $\mathbf{w(a) \one = \one}$} If $w(a)$ is hyperbolic or an inversion, then it fixes no vertex on the tree, therefore we may assume that $w(a)$ is elliptic. Since Haar measure is invariant under conjugation, we may assume, without loss of generality, that the fixed vertex is $\one$. Therefore, setting $\Omega = \{a \in A \ | \ w(a) \in A_0\}$,  {\it it would suffice} to prove that
\begin{equation} \label{eqn:one_word}
 \Haar_A (\{ a \in \Omega \ | \ w(a) {\text{ has infinitely many fixed points}} \}) = 0.
 \end{equation}

\subsection{A canonical form for $\mathbf{w}$} Elements of $\Gamma * \Z$ admit a unique canonical normal form. For our fixed element $w$ this takes the form
 $$w = w_0 w_1 w_2 w_3 \ldots w_{n}.$$
Where,
\begin{itemize}
 \item each $w_i$ is either $t$ or $t^{-1}$ or an element of $\Gamma$,
 \item $w_i, w_{i+1}$ are never both elements of $\Gamma$,
 \item $w_i w_{i+1}$ are never of the form $t t^{-1}$ or $t^{-1}t$.
\end{itemize}
 
\begin{defn} A word $w$ is called {\it cyclicly reduced} if it is not a conjugate of another word whose canonical form is shorter.
\end{defn}
Since  the number of fixed vertices in the tree is a conjugacy invariant, it will be enough to prove Equation \ref{eqn:one_word} for cyclicly reduced words. We will therefore assume, without loss of generality, that our fixed word $w$ is cyclicly reduced.

\subsection{Traces} Given a vertex $y \in T$, define functions\footnote{One should think of these functions as ``random variables''. We insist on calling them functions just to emphasize the fact that they are defined on a measure space which possibly has infinite measure; reserving the term random variable only for functions defined on a probability space.} of the variable $a \in \Omega$ as follows $y_0(a) = y$, $y_1(a) = y_0(a)^{w_0(a)}, y_2(a) = y_1(a)^{w_1(a)}, \ldots, y_{n+1}(a) = y_{n}(a)^{w_{n}(a)} = y^{w(a)} \in T$. Similarly if $e \in ET$ is an edge, define functions $e_0(a) = e$, $e_1(a) = e_0(a) ^{w_0(a)}, \ldots, e_{n+1}(a) = e^{w(a)}$. All of these are functions of $a$ but we will usually suppress $a$ from the notation. The sequence of vertices $(y_0,y_1,y_2,\ldots,y_{n+1})$ is called the {\it trace of $y$ under $w$}. A trace of a vertex or of an edge $\mathcal{T} =\{l_0,l_1,\ldots,l_n,l_{n+1}\}$ is called  {\it simple} if all its elements are distinct, except for a possible equality $l_0 = l_{n+1}$. 

Denote by $C \defeq \Conv \{\one_0,\one_1,\ldots,\one_{n+1} = \one\}$, the convex hull of the trace of the base vertex. Note that $C$ is a function assigning to every $a \in \Omega$ a finite subtree $C(a) \subset T$.

\subsection{Main induction}
The claim will be proved by induction on $n$ - the length of the expansion of $w$ in canonical form. The point is that, since by induction all subwords of $w$ have but finitely many fixed points, then $w$ itself will admit but finitely many vertices whose trace is not simple. The origin of this geometric idea comes from the  proof of \cite[Corollary 4.2]{AV-dimension_theory}; but the geometry here is more complicated due to the existence of hyperbolic elements. 

\subsection{Basis of induction} Let $n=0$. When $\id \ne w = w_0$ is an element of $\Gamma$, it has only finitely many fixed points by assumption. Otherwise $w \in \{t,t^{-1}\}$. In this case $\Omega = A_0$ and $a$ is a $\Haar_{A_0}$-random element. The fact that $a$ almost surely has finitely many fixed points in this case is proved by Ab\'{e}rt and Vir\'{a}g. We quote their theorem here because we will use it a few times in the proof.
\begin{prop} (Ab\'{e}rt-Vir\'{a}g \cite[corollary 2.7]{AV-dimension_theory}) \label{prop:AV_basis}
Let $S$ be a spherically homogeneous locally finite rooted tree. Then a $\Haar_S$-random element $a \in \Aut(S)$ almost surely has but finitely many fixed points.
\end{prop}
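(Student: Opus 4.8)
The plan is to realize the set of fixed vertices of a Haar-random $a\in\Aut(S)$ as (the vertex set of) a \emph{critical} Galton--Watson tree and then to quote extinction. Write $F(a)=\{v\in S:v^{a}=v\}$ for this fixed set. Since $a$ preserves the level of a vertex and the parent map, $v^{a}=v$ forces $(\mathrm{parent}\,v)^{a}=\mathrm{parent}\,v$; hence $F(a)$ is a subtree of $S$ containing the root. As $S$ is locally finite, K\"onig's lemma gives that $F(a)$ is infinite if and only if it contains an infinite ray, i.e.\ if and only if $a$ fixes a point of $\partial S$. Setting $Y_{l}(a)$ to be the number of level-$l$ vertices lying in $F(a)$, connectedness of $F(a)$ shows that $F(a)$ is infinite iff $Y_{l}\ge 1$ for every $l$, and clearly $Y_{l}=0$ forces $Y_{l'}=0$ for all $l'\ge l$. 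So it suffices to prove that almost surely $Y_{l}=0$ for some $l$.

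Next I would unwind the iterated-wreath-product description of $(\Aut(S),\Haar_{S})$. Let $m_{l}$ be the common number of children of a level-$l$ vertex. A Haar-random $a$ induces on the children of the root a uniformly random $\sigma\in\Sym(m_{0})$; conditionally on $\sigma$, the restrictions of $a$ to the subtrees hanging below the children are independent, each Haar-random in the automorphism group of that (again spherically homogeneous) subtree, and independent of $\sigma$. Consequently $F(a)$ meets level $1$ exactly in the fixed-point set of $\sigma$, and below each fixed child the picture repeats with a fresh independent Haar-random automorphism. This exhibits $(Y_{l})_{l\ge 0}$ as a time-inhomogeneous Galton--Watson process whose offspring law from level $l$ to level $l+1$ is the distribution of the number $N_{m_{l}}$ of fixed points of a uniform random permutation of an $m_{l}$-element set. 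Since a uniform random permutation of a finite set has exactly one fixed point in expectation, $\E[N_{m_{l}}]=1$ for every $l$: the process is critical, and $\{Y_{l}=0\text{ for some }l\}$ is precisely the event of extinction.

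It remains to show that this critical process dies out almost surely. When the branching is (eventually) constant --- in particular for the $d$-regular rooted tree relevant here, where $m_{0}=d$ and $m_{l}=d-1\ge 2$ for $l\ge 1$ --- this is the classical extinction theorem for non-degenerate critical Galton--Watson processes. For general $S$ one argues with the offspring generating functions $f_{l}(s)=\E[s^{N_{m_{l}}}]$: each is strictly convex with $f_{l}(1)=f_{l}'(1)=1$, and since $\mathrm{Var}(N_{m_{l}})=1$ whenever $m_{l}\ge 2$ one gets a uniform quadratic bound $f_{l}(s)\ge s+c(1-s)^{2}$ on $[0,1]$ with $c>0$ independent of $l$. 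Letting $q^{(l)}$ be the extinction probability of the process started from one particle at level $l$ and $u_{l}=1-q^{(l)}$, the recursion $q^{(l)}=f_{l}(q^{(l+1)})$ together with $f_{l}(s)\ge s$ gives $u_{l}\le u_{l+1}$ for all $l$, while the quadratic bound gives $u_{l}\le u_{l+1}-c\,u_{l+1}^{2}$ at every level with $m_{l}\ge 2$; since there are infinitely many such levels, letting $l\to\infty$ forces $u_{\infty}=0$, hence $u_{l}\equiv 0$ and $q^{(0)}=1$. Thus $F(a)$ is almost surely finite, which is the assertion. The step that genuinely needs care is the uniform lower bound on $f_{l}$ --- it is exactly what rules out the offspring variances degenerating along the levels (one checks it from the monotonicity of $f_{l}-\mathrm{id}$ on $[0,1]$ and a uniform-in-$m$ lower bound on $f_{l}''(\tfrac12)$), and it is also the point at which one must exclude the degenerate trees that are eventually a disjoint union of rays, for which the statement fails; everything else is routine bookkeeping with the Haar-measure dictionary of Section 2.
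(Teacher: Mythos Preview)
Your argument is correct and follows exactly the approach the paper sketches (and attributes to Ab\'ert--Vir\'ag): the fixed-vertex set is a subtree whose level sizes form a critical Galton--Watson process with offspring law the fixed-point count of a uniform permutation, and criticality forces almost-sure extinction. You give considerably more detail than the paper---which simply cites the original reference---including a careful treatment of the time-inhomogeneous extinction step and the necessary exclusion of the degenerate ``eventually a union of rays'' case; for the applications in this paper the tree is $(d-1)$-ary with $d\ge 3$, so the homogeneous critical case already suffices.
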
 
\begin{proof}
For a complete proof see the above reference. In short, what they show is that the tree of fixed points assumes the distribution of a critical Galton-Watson tree and therefore is almost surely finite.
\end{proof}

\subsection{The sphere of radius $\mathbf{M}$}
\begin{lem} \label{lem:M}
There exists a measurable function $M: \Omega \arrow \N \cup \{\infty\}$, which is finite almost everywhere, satisfying the following properties:
 \begin{enumerate}
 \item For every $y \in T$ with $d(y,\one) \ge M(a)$ the trace $(y_0, y_1,\ldots y_{n+1})$ is simple. 
 \item For every edge $e \in E^{+}T$ with $d(\one,e) \ge M$ all the edges in the trace $(e_0,e_1,e_2 \ldots, e_{n+1}) \subset E^{+}T$, are positively oriented.
 \end{enumerate}
\end{lem}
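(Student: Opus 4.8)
The plan is to construct $M(a)$ directly as the smallest radius beyond which nothing "bad" can happen, and then verify finiteness almost everywhere by an induction-and-measure argument. First I would define, for each $a \in \Omega$, the set $B(a)$ of vertices $y \in T$ whose trace $(y_0,\ldots,y_{n+1})$ fails to be simple, together with the set of positive edges $e$ some edge of whose trace lands in $ET \setminus E^+T$; then set $M(a) := 1 + \sup\{d(\one,y) : y \in B(a)\} \cup \{d(\one,e): e \text{ bad}\}$, with $M(a)=\infty$ if the sup is infinite. Measurability is routine: for each fixed $y$ (resp.\ $e$), the condition "the trace of $y$ is not simple" is determined by finitely many coordinates of the cocycle decomposition of $a$ (Proposition~\ref{prop:Haar}) applied along the finite path traced out, hence is a measurable—indeed clopen on cylinders—condition; $M$ is then a countable sup/inf of measurable functions. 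Properties (1) and (2) hold by the very definition of $M$.

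The real content is that $M < \infty$ almost everywhere, i.e.\ that almost surely only finitely many vertices (and edges) have non-simple traces. Here I would run the main induction on $n = $ the canonical length of $w$. For the inductive step, observe that if the trace of $y$ is not simple, then either $l_0 = l_{n+1}$ is the only coincidence (allowed) or there is a strictly earlier coincidence $l_i = l_j$ with $(i,j) \ne (0,n+1)$; in the latter case $y$ lies in the trace-fixed locus of some proper cyclic subword $w' = w_i w_{i+1}\cdots w_{j-1}$ of $w$, meaning $y_i(a) = y_j(a)$, i.e.\ $y_i(a)$ is a fixed point of the evaluation $w'(a')$ for an appropriate conjugate, where $w'$ has canonical length $< n$. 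Since $w$ is cyclically reduced, each such $w'$ is a nontrivial element of $\Gamma * \Z$, so by the induction hypothesis (and the basis, Proposition~\ref{prop:AV_basis}, together with the almost-free hypothesis on $\Gamma$) almost surely $w'(a')$ has only finitely many fixed vertices. The preimage of a finite fixed set under $y \mapsto y_i(a)$ is finite because $y \mapsto y_i(a)$ is, for fixed $a$, a composition of automorphisms and hence a bijection of $T$. Intersecting over the finitely many choices of $(i,j)$, almost surely only finitely many $y$ have non-simple trace. The edge statement (2) is handled the same way—or more simply, since $d(\one, e_k(a))$ changes by at most the displacement of the corresponding generator, an edge far from $\one$ whose trace leaves $E^+T$ would force some vertex in the trace of $oe$ to be close to $\one$, bounding such edges by a function of the already-controlled vertex data.

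The main obstacle I anticipate is the bookkeeping in the inductive step: matching a coincidence $l_i = l_j$ in the trace to a genuinely shorter word in $\Gamma * \Z$ to which the induction hypothesis applies, while being careful that cyclic reduction of $w$ is exactly what guarantees these subwords are nontrivial (an uncancelled subword could be trivial in $\Gamma * \Z$ only if $w$ were not cyclically reduced), and that the relevant conjugating element does not alter the (in)finiteness of the fixed set. A secondary subtlety is that the induction must be applied to the right random element: $y_i(a)$ depends on the prefix $w_0\cdots w_{i-1}$ of $w$ evaluated at $a$, so one should phrase the induction hypothesis as a statement about all words of length $< n$ evaluated at $\Haar_A$-random elements, and then invoke it after checking that the relevant pushforward distribution is again Haar (or absolutely continuous with respect to it), which follows from conjugation-invariance of $\Haar_A$ as already used in the reduction to $w(a)\one = \one$.
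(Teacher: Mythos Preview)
Your proposal is correct and follows essentially the same route as the paper: for (1) you invoke the induction hypothesis on the proper subwords $w_i\cdots w_{j-1}$ (whose fixed-point sets are finite a.e.), and pull back via the bijection $y\mapsto y_i(a)$; for (2) you bound the bad edges by the geometry of the trace of $\one$. Two small simplifications: the paper handles (2) cleanly by the single observation that if $d(\one,te)>\diam C(a)+1$ then $e_i$, being the last edge of $[\one_i,te_i]$, must point away from the convex set $C\ni\one$; and your final worry about pushforwards of Haar measure is unnecessary, since the induction hypothesis is applied to $w'(a)$ for the \emph{same} $a$ (no conjugate $a'$ is needed), and the finiteness of $\{y:y_i(a)=y_j(a)\}$ is then a deterministic consequence of $\Fix(w'(a))$ being finite.
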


\begin{proof}
The first statement follows directly from the induction hypothesis: choose $M$ large enough so that the ball $B_{M}(\one)$ contains all the fixed vertices for all the (cyclic) subwords of $w$, of the form $w_i w_{i+1} \ldots w_{i+k \ [\hspace{-5pt}\mod n+1]}$ with $0 \le k < n$. Note that, by the assumption that $w$ is cyclicly reduced, all the cyclic subwords of $w$ are still in canonical form and the induction hypothesis does hold for them. 

It is automatically true that $[\one_i,t e_i]^{w_i} =[\one_{i+1},te_{i+1}]$ and if $M$ is large enough that $M = d(\one,te) = d(\one_i,te_i) > \diam C+1$ then $e_i$ which is the last edge of $[\one_i,te_i]$ will have to face away from the convex set $C$ and hence away from $\one$.
\end{proof}

\begin{rem} \label{rem:ordering_trees}
Let $e$ be an edge such that $d(\one,e) > M$. It follows directly from Lemma \ref{lem:M} above that for $0 \le i \ne j \le n$ the corresponding shadows satisfy:
\begin{itemize}
\item Either $\Shadow[e_i] \cap \Shadow[e_j] = \emptyset$. 
\item Or $\Shadow[e_i] \subsetneqq \Shadow[e_j]$, up to possibly exchanging $i$ and $j$.
\end{itemize} 
\end{rem}

Let $S = S(a) = \{e \in E^{+}T \ | \ d(\one,oe) = M\}$ be the {{\it sphere of edges of radius $M$ around $\one$}}, where $M = M(a)$ is the number appearing in Lemma \ref{lem:M}. Since
 $$T \setminus \left( \cup_{e \in S} \Shadow[e] \right) = B_M(\one),$$
is a finite set, {\it the proof will be concluded by} establishing that  for almost every $a \in \Omega$ and for every $e \in S$ the automorphism $w = w(a)$ has only finitely many fixed points inside $\Shadow[e]$. 

Let us fix such an edge $e \in S$. The easy case is when $e \ne e^w$. In this case there are two options, either $\Shadow[e]$ and $\Shadow[e^w] = \Shadow[e]^w$ are completely disjoint or one of them properly contains the other. In any case there are no fixed vertices in $\Shadow[e]$. In what follows it will be assumed that our fixed edge $e$ has a closed trace, namely that $e^{w} = e$, as in Figure \ref{fig:genII}.

\subsection{Conditioning on the trace.}
Let us define 
$$\Omega_{\T} = \{a \in \Omega \ | \ (e_0,e_1,e_2,\ldots e_{n+1} = e_n) = \T \},$$
 where $\T$ is some fixed trace. Since there are only countably many possibilities for the trace of $e$, we can prove the proposition for each trace $\T$ separately. The advantage of this approach is that $\Omega_{\T}$ has finite Haar measure. Let us fix a trace $\T$ and re-normalize Haar measure so that $\Omega_{\T}$ becomes a probability space. From now on, for simplicity of notation, denote $\Shadow[i] \defeq \Shadow[e_i]$. Note that since the trace $\T$ is simple these, shadows will satisfy the conditions of Remark \ref{rem:ordering_trees}, namely they will be either disjoint, or properly contained in each other. 

\subsection{A special edge in the trace}
From all the edges $e_i \in \T$ in our trace we will fix one $e_I; \ 0 \le I \le n$ that satisfies the following conditions
\begin{itemize}
\item Either $w_I =t$ or $w_{I-1} = t^{-1}$,
\item $\Shadow[I]$ is deeper into the tree than any other edge in the trace satisfying the above condition. Namely if $0 \le i \le n; \  i \ne I$ is another index such that $w_i =t$ or such that $w_{i-1} =t^{-1}$ then either $\Shadow[i] \cap \Shadow[I] = \emptyset$ or $\Shadow[I] \subsetneqq \Shadow[i]$.
\end{itemize}
In Figure \ref{fig:genII}  for example, if $w_2 = t$ or if $w_1 = t^{-1}$ it is possible to choose $I = 2$, but it would be illegal to choose $I = 0$ because $\Shadow[2] \varsubsetneqq \Shadow[0]$.
\begin{figure}
\begin{center}
\includegraphics[width=10cm]{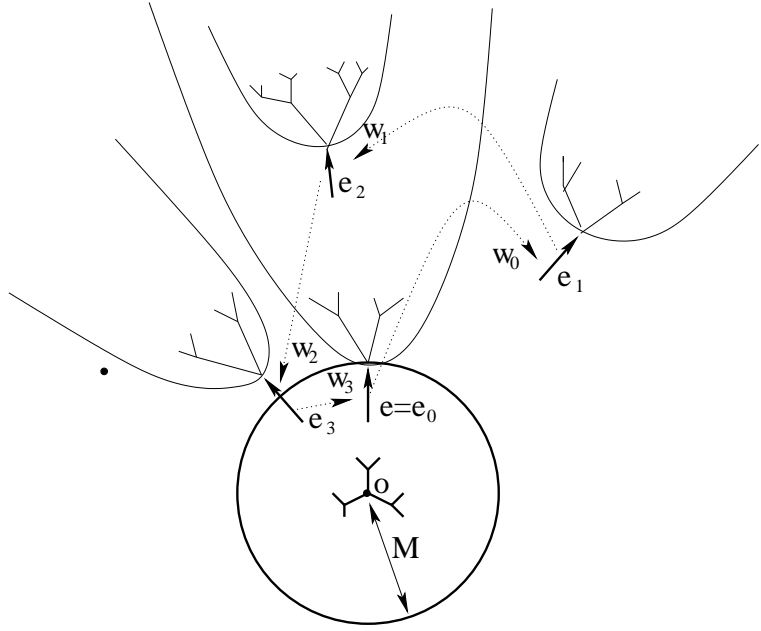}
\caption{A typical arrangement of the shadow trees of the edges in $\T$.}
\label{fig:genII}
\end{center}
\end{figure}

\subsection{Looking for a Haar random element in the rooted tree.}
Since $e^w = e$, $w(a)$ fixes the $(d-1)$-regular rooted tree $Y \defeq \Shadow[e]$,  $w(a)|_{Y}$ is a random automorphism of $Y$. If we can show that $w(a)|_{Y}$ admits the distribution of a $\Haar_Y$-uniform element, Proposition \ref{prop:AV_basis} will imply that $Y$ almost surely contains but finitely many fixed points, and {\it our theorem will follow}. 

By Lemma \ref{lem:Haar_LPC} $w|_Y$ is $\Haar_Y$-random if the random variables $$\{\oxi(w,x) \in \Sym(\Sigma)_{0} \ | \ x \in Y \}$$ are mutually independent and uniformly distributed. To check this latter condition, it is enough to order the vertices in $Y$ and verify that each one is independent of the joint distribution of all the previous ones$$Y = \{te = x^0,x^1,x^2,x^3, \ldots\}.$$ 
We will require that this ordering be consistent with the distance from the root; i.e. that $d(te,x^j) \le d(te,x^{j+1}) \ \forall j$. In particular this implies that $x^0 = te$. 

\begin{rem} \label{rem:reduction}
To summarize all the reductions so far, {\it the main theorem will be proved} if we show that $\oxi(w(a),x^j)$ is a uniform element of $\Sym(\Sigma)_{0}$ and independent of the joint distribution of $\{\oxi(w(a),x^k) \ | \ 0 \le k < j \}$
\end{rem} 

\subsection{Decomposing the cocycle}
Let us expand $\oxi(w,x^j)$ using the cocycle condition
\begin{equation} \label{eqn:expansion}
\oxi (w,x^j) = \oxi(w_0,x^j_0) \oxi(w_1,x^j_1) \ldots  \oxi(w_n,x^j_n).
\end{equation}
where $(x^j_0 ,x^j_1,\ldots, x^j_n)$ is the trace of $x^j$. 

\subsection{Singling out certain values of the cocycle} \label{sec:sing}
Recall the two possibilities occurring in the definition of the special index $I$, either $w_I = t$ or $w_{I-1}^{-1} = t$. Let us single out certain special values of the cocycle, the definition of which will vary slightly according to these two possibilities
$$
\OXi^j  \defeq   
   \left \{ \begin{array}{ll} 
      \oxi(w_I, x_I^j) = \oxi(a,x^j_I)  & {\text{ if }} w_I = t \\
      \oxi(w_{I-1},x^j_{I-1}) = \oxi(a^{-1},x^j_{I-1})  & {\text{ if }} w_{I-1} = t^{-1}
   \end{array} \right. .
$$
We claim that, as $a$ ranges over the probability space $\Omega_{\T}$,
\begin{enumerate}
\item \label{itm:uniform} $\OXi^j$ are uniform elements of $\Sym(\Sigma)_0$. 
\item \label{itm:ind} $\OXi^j$ is independent of the mutual distribution of the other random variables in the expansion \ref{eqn:expansion} 
  $$\{\oxi(w_i,x^k_i) \ | \ 0 \le i \le n, \ k \le j\} \setminus \{\OXi^j\}.$$§ñ
\end{enumerate}

\subsection{The statistical distribution of the cocycle values.}
\begin{defn} 
Let $\Xi: \Omega_{\T} \arrow \Upsilon$ be a random variable into a probability space $\Upsilon$, H a group and  $H \curvearrowright^{\times} \Upsilon$ a measure preserving action. We say that $H$ {\it{acts on the random variable $\Xi$ via the given action}} if there exists a measure preserving action $H \curvearrowright^{*} \Omega_{\T}$ such that the two actions are intertwined by $\Xi$, in the sense that $\Xi(\sigma * a) = \sigma \times \Xi(a),\ \forall \sigma \in H, \ a \in \Omega_{\T}$. Such an action will be denoted by $H \curvearrowright^{\times} \Xi.$
\end{defn}

Claims (\ref{itm:uniform}),(\ref{itm:ind}) of Section \ref{sec:sing} above concerning the statistics of the random variables $\{\oxi(w_i,x^k_i) \ | \ 0 \le i \le n, \ k \le j\}$, will be proved using certain actions $\Sym(\Sigma)_0 \curvearrowright^{\stackrel{j}{\times}} \oxi(w_i,x^k_i)$ of the finite group $\Sym(\Sigma)_0$ on these random variables. The action $\stackrel{j}{\times}$ will be transitive on $\OXi^j$ while fixing the other random variables. In fact the former action will be the left regular action of $\Sym(\Sigma)_0$ on itself in the case where $w_I = t$ and the right regular action of the same group in case $w_{I-1} = t^{-1}$.

The existence of such actions will be established in the following two sections. For now, assume that such actions have already been constructed and let us demonstrate how properties (1), (2) above follow. Assume that $w_I = t$, the treatment of the other case $w_{I-1} = t^{-1}$ being almost identical. For the first property write:
$$P \left\{ \OXi^j = \sigma \right\}
   =  P \left\{ \OXi^j( \sigma' \stackrel{j}{*} a) = \sigma' \sigma \right\} 
   =  P \left\{ \OXi^j = \sigma' \stackrel{j}{\times} \sigma \right\}$$
This concludes the proof of (1) because, since the action is transitive,  $\sigma'  \stackrel{j}{\times} \sigma$ is an arbitrary element of $\Sym(\Sigma)_0$. For the second property, let $f(a)$ be any random variable, which depends on $a$ only as a function of the values of the random variables $\{\oxi(w_i,x^k_i) \ | \ 0 \le i \le n, 0 \le k \le j \} \setminus \{\OXi^j\}$ (excluding $\OXi^j$). Then for every measurable set $B$ and every $\sigma' \in \Sym(\Sigma)_0$
\begin{eqnarray*}
P \left\{ (f \in B) {\text{ and }} (\OXi^j = \sigma) \right\} 
   & = & P \left\{ (f(\sigma' \stackrel{j}{*} a) \in B) {\text{ and }} (\OXi^j(\sigma' \stackrel{j}{*} a)) = \sigma \right\} \\
   & = & P \left\{ (f \in B) {\text{ and }} (\OXi^j = (\sigma')^{-1} \stackrel{j}{\times} \sigma) \right\}
\end{eqnarray*} 
which gives the desired independence result, again because $(\sigma')^{-1} \stackrel{j}{\times} \sigma$ is an arbitrary element of $\Sym(\Sigma)_0$.

\subsection{Construction of the actions $\mathbf{Sym(\Sigma)_0 \curvearrowright ^{\stackrel{j}{\times}} \oxi(w_i,x^k_i)}$}

Given an index $j \in \N$ consider the embedding 
$\eta^j: \Sym(\Sigma)_0 \arrow \Aut(T).$
Where the automorphism $\eta^j \sigma \in \Aut(T)$ is defined,  via the identification described by Proposition \ref{prop:Haar} 
\begin{equation*}
\eta^j \sigma \one = \one {\text{ and,}} \hspace{1.5cm}
\oxi(\eta^j \sigma ,x)  = \left\{ \begin{array}{ll} 
  \sigma & {\text{ if }} x = x^j_I  \\
  \id & {\text{ otherwise}} \end{array} \right. .
\end{equation*}
In particular, setting $f^j$ to be the unique positively oriented edge such that $tf^j = x^j_I$ and $Z := \Shadow[f^j]$. The automorphism $\eta^j \sigma$ fixes $T \setminus Z$ pointwise. This embedding, composed with the left regular action of $\Aut(T)$ on itself, gives rise to a measure preserving action $\Sym(\Sigma)_0 \curvearrowright^{\stackrel{j}{*}} \Omega_{\T}$
$$
 \sigma *^{j} a \defeq (\eta^j \sigma) a.
$$

To verify this we only need to show that the subset $\Omega_{\T}$ is invariant. Given $a \in \Omega_{\T}, \ \sigma \in \Sym(\Sigma)_0$, let us verify that $\sigma\stackrel{j}{*}a = (\eta^j \sigma) a \in \Omega_{\T}$. Explicitly this means that $\T = \T'$ where 
$$\T' \defeq \operatorname{Trace}(\sigma^j a)  = \left \{e = e'_0, e'_1 = (e'_0)^{w_0(\sigma \stackrel{j}{*} a)}, e'_2 = (e'_1)^{w_1(\sigma \stackrel{j}{*} a)} \ldots \right\}.$$ By induction on $i$ let us assume that $e_i = e'_i$ and show that $e'_{i+1} = e_{i+1}$. If $w_i \in \Gamma$ the induction step is obvious because $w_i$ is a constant independent of its argument. If $w_i =t$ then, by choice of the index $I$, either $i = I$ and $e'_i = e_i = e_I$ or $e_i \not \in Z$. In both cases $e_i^{\eta^j \sigma} = e_i$ so that $e'_{i+1} = e_i^{(\eta^j \sigma) a} = e_i^{a} = e_{i+1}$. Finally if $w_i = t^{-1}$ then, by choice of the index $I$, either $i+1 = I$ and $e_{i+1} = e_I$ or $e_{i+1} \not \in Z$. In any case $e'_{i+1} = e_i^{a^{-1} (\eta^j \sigma)^{-1}} = e_{i+1}^{(\eta^j \sigma)^{-1}} = e_{i+1}$. Hence by induction $e'_i = e_i, \ \forall 0 \le i \le n$ so $\T' = \T$ and $\sigma \stackrel{j}{*} a \in \Omega_{\T}$. 

\subsection{The effect of the actions on the random variables.} Fixing an index $j \in \N$, we will now show that the action $\stackrel{j}{*}$ has the following effect on the random variables $\oxi(w_i,x^k_i)$:             
\begin{equation*}
\oxi\left( w_i \left( \sigma \stackrel{j}{*} a \right) , x^k_i \right)  = \left\{ \begin{array}{ll} 
  \sigma \oxi(w_i(a),x^k_i) & {\text{ if }} k=j ,  i = I  {\text{ and }} w_I = t  \\
  \oxi(w_i(a),x^k_i) \sigma^{-1} & {\text{ if }} k =j , i = I-1  {\text{ and }} w_{I-1}^{-1} = t\\
  \oxi(w_i(a),x^k_i) & {\text{ otherwise}}  
 \end{array} \right. 
\end{equation*}
Let us denote by $D = \{x^k_i \ | \ 0 \le k \le j, \ 0 \le i \le n, \ w^k_i = t {\text{ or }} w^k_{i-1} = t^{-1}\}$. Substituting into $w_i$ the appropriate values $t, t^{-1}$ or a letter of $\Gamma$ whenever necessary; we see that in order to verify the above statement it is enough check that ``for every $x^k_i \in D$ such that $\oxi(\sigma \stackrel{j}{*} a,x^k_i) \ne \oxi(a,x^k_i)$ we actually have $I = i$ and $k = j$''.  Such a vertex $x^k_i \in D$  must be contained in $\Shadow[f^j] \subset \Shadow[I]$, since $\eta^j \sigma$ fixes  $T \setminus \Shadow[f^j]$ pointwise. Since $x^k_i \in \Shadow[I]$ then $\x^k_i = x^{k'}_I$ for some $k' \in \N$. Furthermore $k' \ge j$ because $x^k_i = x^{k'}_I \in \Shadow[f^j]$ (with $tf^j = x^j_I$) and the ordering of the vertices is compatible with the distance from the root.  

Let us first demonstrate that $i = I$. If this were not the case then $e_i \ne e_I$ because, by the induction hypothesis, the trace of $e$ is simple. These shadows cannot be disjoint, because $x^k_i = x^{k'}_I \in \Shadow[i] \cap \Shadow[I] \ne \emptyset$ so it follows from the definition of the special index $I$ that $\Shadow[I] \subsetneqq \Shadow[i]$ and hence
$$\level_{i}(x^k_i) > \level_{I}(x^k_i) = \level_{I}(x^{k'}_I) = \level_{i}(x^{k'}_i).$$
Here $\level_i$ denotes the level with respect to the rooted tree $\Shadow[i]$. But, because $k < j \le k'$, this is a contradiction to the fact that the ordering of the vertices $x^0_i,x^1_i,x^2_i \ldots$ in the rooted tree $\Shadow[i]$ is compatible with the depth. Therefore $i=I$ and using this it follows that $x^k_i = x^{k'}_I = x^{k'}_i$ which implies that $k = k'$. But $k \le j \le k'$ so that $j = k$ as desired. 

\begin{rem} Note that the argument above shows also that the vertex $x^j_I$ appears in $D$ exactly once.
\end{rem}
\subsection{The Haar measure trick.} Consider the cocycle equation \ref{eqn:expansion}, and write it in the form:
$$\oxi (w,x^j) = A^j \OXi^j B^j,$$
where $A^j,B^j$ are exactly what they should be in order to make this work:
\begin{eqnarray*}
A^j & \defeq & \left \{ \begin{array}{ll}
   \oxi(w_0,x^j_0) \oxi(w_1,x^j_1) \ldots \oxi(w_{I-1},x^j_{I-1}), & {\text{ if }} w_I = t \\
   \oxi(w_0,x^j_0) \oxi(w_1,x^j_1) \ldots \oxi(w_{I-2},x^j_{I-2}), & {\text{ if }} w_{I-1} = t^{-1} 
   \end{array} \right. \\
B^j & \defeq &   \left \{ \begin{array}{ll}
   \oxi(w_{I+1},x^j_{I+1}) \oxi(w_{I+2},x^j_{I+2}) \ldots \oxi(w_n,x^j_n), & {\text{ if }} w_I = t \\
   \oxi(w_{I},x^j_{I}) \oxi(w_{I+1},x^j_{I+1}) \ldots \oxi(w_n,x^j_n), & {\text{ if }} w_{I-1} = t^{-1} 
   \end{array} \right. 
\end{eqnarray*}

Summarizing the information about the statistical distribution of the random variables $\oxi(w_i,x^k_i)$ we conclude that 
\begin{enumerate}
 \item $\OXi^j$ are $\Haar_{\Sym(\Sigma)_0}$-random elements. 
 \item $\OXi^j$ is independent of the mutual distribution of all the random variables 
 $\{A^k,\OXi^k,B^k \ | \ 0 \le k \le j\} {\text{ excluding }} \OXi^j.$
\end{enumerate}
These properties are inherited by the random variables $\oxi(w,x^j)$.
\begin{enumerate}
 \item $\oxi(w,x^j)$ is a $\Haar_{\Sym(\Sigma)_0}$-random element. 
 \item $\oxi(w,x^j)$ is independent of the mutual distribution of all the random variables 
 $\{\oxi(w,x^k) \ | \ 0 \le k \le j\}.$
\end{enumerate}
Indeed if we condition on the values of the random variables $\{A^k, B^k \ | \ 0 \le k \le j\}$, then the two properties above follow directly from the corresponding properties for the factors. In particular, the uniform distribution for $\oxi(w,x^j)$ follows from the same property for $\OXi^j$ by the invariance of $\Haar_{\Sym(\Sigma)_0}$-measure under left and right multiplication. Now, integrating over all possible values of $A^k,B^k$, the desired properties of the random variables $\oxi(w,x^j)$ follow unconditionally. 

This completes the proof of the theorem, as observed in Remark \ref{rem:reduction}. Indeed it now follows that 
$$w|_{\Shadow[e]} \in \Aut(\Shadow[e])$$ 
admits the distribution of a $\Haar_{\Aut(\Shadow[e])}$-random element because the random variables $\oxi(w,x^j)$ form the local permutation data corresponding to this rooted tree automorphism. Hence, by Ab\'{e}rt and Vir\'{a}g's Proposition \ref{prop:AV_basis}, the element $w|_{\Shadow[e]}$ has but finitely many fixed vertices in $\Shadow[e]$. But this is true for every $w$-fixed edge $e \in S$ in the sphere of radius $M$; so that $w$ admits only finitely many fixed vertices in the whole tree. 

\bibliographystyle{amsalpha}
\bibliography{../tex_utils/yair}

\def\cprime{$'$} \def\cprime{$'$} \def\cprime{$'$} \def\cprime{$'$}
  \def\cprime{$'$}
\providecommand{\bysame}{\leavevmode\hbox to3em{\hrulefill}\thinspace}
\providecommand{\MR}{\relax\ifhmode\unskip\space\fi MR }
\providecommand{\MRhref}[2]{%
  \href{http://www.ams.org/mathscinet-getitem?mr=#1}{#2}
}
\providecommand{\href}[2]{#2}
\begin{thebibliography}{AG07}

\bibitem[AG07]{AG:Generic}
Mikl\'os Ab\'ert and Yair Glasner, \emph{Generic groups acting on a regular
  tree}, To appear in Trans. AMS.; math.GR/0702736, 2007.

\bibitem[AV05]{AV-dimension_theory}
Mikl{\'o}s Ab{\'e}rt and B{\'a}lint Vir{\'a}g, \emph{Dimension and randomness
  in groups acting on rooted trees}, J. Amer. Math. Soc. \textbf{18} (2005),
  no.~1, 157--192 (electronic). \MR{MR2114819 (2005m:20058)}

\bibitem[Gla]{G:ZO}
Yair Glasner, \emph{A zero one law for dense finitely generated subgroups of
  some totally disconnected groups.}, To appear in Transformation Groups.

\bibitem[Ser80]{Serre:Trees}
Jean-Pierre Serre, \emph{Trees}, Springer-Verlag, Berlin, 1980, Translated from
  the French by John Stillwell. \MR{82c:20083}

\end{thebibliography}
\end{document}